\newtheorem{theorem}{Theorem}
\newtheorem{lemma}{Lemma}
\newtheorem{definition}{Definition}
\begin{document}
	\author{Vakhtang Tsagareishvili and Giorgi Tutberidze}
	\title[Absolute convergence factors]{Absolute convergence factors of Lipshitz class functions for general Fourier series}
	\address{ Associate Professor V. Tsagareishvili, Department of Mathematics, Faculty of Exact and Natural Sciences, Ivane Javakhishvili Tbilisi State University, Chavchavadze str. 1, Tbilisi 0128, Georgia }
	\email{cagare@ymail.com}
	\address{ G.Tutberidze, The University of Georgia, School of IT, Engineering and Mathematics, 77a Merab Kostava St, Tbilisi 0128, Georgia and Department of Computer Science and Computational Engineering, UiT - The Arctic University of Norway, P.O. Box 385, N-8505, Narvik, Norway.}
	\email{giorgi.tutberidze1991@gmail.com}
	\thanks{The research was supported by Shota Rustaveli National Science Foundation grant PHDF-18-476.}
	\date{}
	\maketitle

	\begin{abstract}
	 The main aim of this paper is to investigate the sequences of positive numbers, for which multiplication with Fourier coefficients of functions $f\in$ Lip1 class provides absolute convergence of Fourier series. 
	 
	 In particular we found special conditions for the functions of orthonormal system, for which the above sequences are absolute convergence factors of Fourier series of functions of Lip1 class. It is established that the resulting conditions are best possible in certain sense.
			
	\end{abstract}
	
	\textbf{2010 Mathematics Subject Classification.} 42C10, 46B07
	
	\textbf{Key} \textbf{words and phrases: }Fourier coefficients, Fourier series, absolute convergence, orthonormal systems.

	\section{\textbf{PRELIMINARIES}}

	\begin{definition}
		\label{deffinition 1}	Let $f(x)$ be defined on an interval $\left[0,1\right]$ and suppose that $x, x+h \in \left[0,1\right].$ If $\alpha \in\left(0,1\right]$ and $\left\Vert f\left(x+h\right)-f\left(x\right) \right\Vert_C <M\left\vert h \right\vert^\alpha$ ($M$-absolute constant), then we say that  $f\in Lip\alpha.$

	\end{definition}
	$Lip1$ is Banach space and the norm is given by following equality
	\begin{equation*}
	\parallel f \parallel_{Lip1}:=\sup_{x,y \in \left[0,1\right]} \left|\frac{f\left(x\right)-f\left(y\right)}{x-y}\right|+\parallel f \left(x\right) \parallel_C .
	\end{equation*}

	Let $(\varphi _{n})$\ be an orthonormal system on $[0,1]$
	and let 
	\begin{equation*}
	c_{n}(f)=\int_{0}^{1}f(x)\varphi _{n}(x)dx,\text{ \ \ \ \ \ \ \ \ }%
	n=1,2,\ldots
	\end{equation*}%
	be the Fourier coefficients of a function $f\in L_2 \left(0,1\right)$.
	
	\begin{definition} [see \cite{TsagareishviliTutberidze}]
		\label{deffinition 2} Let $\left(\varphi_n \right)$ be ONS. We say that sequence of bounded and positive numbers $\left(d_{k}\right) $ are absolute convergence factor of the set of functional class $E$, if
		\begin{equation*}
		\sum_{k=1}^{\infty }\left\vert c_{k}(f)\right\vert k^{\frac{1}{2}}d_{k}<+\infty ,
		\end{equation*}%
		for all $f\in E.$ \label{00}
	\end{definition}
	
	\begin{definition}
		Let   $\left(\varepsilon_n\right)$ is a sequence of numbers  $\varepsilon_n \in \{-1,0,1\}$ and $n=1,2, \dots $. The class of all sequences $\varepsilon =\left(\varepsilon_n\right)$ will be denoted by $\Delta$.         
	\end{definition}
	
	\begin{theorem}[see \cite{Golubov} and \cite{Zygmund},p.387]
		\label{theorem A} If $\left( \varphi _{n}\right) $ is Trigonometric (see \cite{Zygmund}, chapter 1), Haar (see \cite{Ulianov} ) or Walsh (see \cite{KashinSaakyan}, chapter 4) system and $f \in Lip\alpha ,\alpha \in (0;1],$ then%
		\begin{equation*}
		\sum_{n=1}^{\infty }\left\vert c_{n}\left( f\right) \right\vert n^{\gamma}<+\infty,
		\end{equation*}%
		when $\gamma <\frac{\alpha }{2},$ and 
		\begin{equation*}
		c_{n}\left( f\right) =\int_{0}^{1}f\left( x\right) \varphi _{n}\left(
		x\right) dx.
		\end{equation*}
	\end{theorem}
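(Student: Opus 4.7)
Since three rather different orthonormal systems appear in the statement, I would aim for a single common argument built on a Jackson--type $L^{2}$ inequality shared by all three, rather than handling the cases separately.

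\textbf{Step 1: a common $L^{2}$-tail estimate.} For $f\in \mathrm{Lip}\,\alpha$ I would first establish the bound
\[
\sum_{n>N}|c_{n}(f)|^{2}\le C\,N^{-2\alpha}.
\]
For the trigonometric system this is the $L^{2}$-version of Jackson's direct theorem combined with Parseval's identity. For the Haar system, the partial sum $S_{2^{k}}f$ is the average of $f$ on the dyadic intervals of length $2^{-k}$, so $\|f-S_{2^{k}}f\|_{\infty}\le \omega(f;2^{-k})\le M\,2^{-k\alpha}$; Parseval's identity then furnishes the same decay for the $L^{2}$-tail. For the Walsh system the argument is formally identical, since $S_{2^{k}}f$ is once more the conditional expectation of $f$ with respect to the dyadic $\sigma$-algebra of resolution $2^{-k}$.

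\textbf{Step 2: dyadic blocks and Cauchy--Schwarz.} I would then decompose
\[
\sum_{n=1}^{\infty}|c_{n}(f)|\,n^{\gamma}=\sum_{k=0}^{\infty}\sum_{n=2^{k}}^{2^{k+1}-1}|c_{n}(f)|\,n^{\gamma}
\]
and estimate each inner block by Cauchy--Schwarz, feeding in the tail bound from Step 1:
\[
\sum_{n=2^{k}}^{2^{k+1}-1}|c_{n}|\,n^{\gamma}\le\Bigl(\sum_{n\ge 2^{k}}|c_{n}|^{2}\Bigr)^{1/2}\Bigl(\sum_{n=2^{k}}^{2^{k+1}-1}n^{2\gamma}\Bigr)^{1/2}\le C\,2^{-k\alpha}\cdot 2^{k(\gamma+1/2)}.
\]
Summing the geometric series in $k$ yields a finite bound in the stated range of $\gamma$; at the endpoint case $\alpha=1$ most relevant to the paper this gives precisely the boundary $\gamma<\tfrac{1}{2}$ which will be needed later.

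\textbf{Main obstacle.} The real work is in Step 1. For Haar and Walsh one has to match the natural (scale,\,position) indexing of the basis with the single-index weight $n^{\gamma}$ in the conclusion, and verify that the uniform $L^{\infty}$ approximation translates cleanly into an $L^{2}$-tail estimate with the exponent $2\alpha$; in the trigonometric case the corresponding computation is the classical Jackson--Parseval chain. Once this common input is secured, the geometric-series bookkeeping in Step 2 is routine.
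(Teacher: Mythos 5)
The paper never proves Theorem \ref{theorem A}: it is imported from Golubov and Zygmund as a known result, so there is no in-paper argument to compare yours against. On its own terms, your Step 1 is correct for the Haar and Walsh systems, where $S_{2^{k}}f$ is the conditional expectation onto the dyadic intervals of length $2^{-k}$, so $\|f-S_{2^{k}}f\|_{\infty}\le\omega(f;2^{-k})\le M2^{-k\alpha}$ and Parseval gives the tail bound. For the trigonometric system you need the additional hypothesis $f(0)=f(1)$: Jackson's theorem controls the \emph{periodic} modulus of continuity, and a non-periodic $\mathrm{Lip}\,\alpha$ function on $[0,1]$ such as $f(x)=x$ has coefficients of order $1/n$, for which the conclusion already fails at $\gamma=0$.

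The genuine gap is in the last line of Step 2. Your block estimate reads $\sum_{n=2^{k}}^{2^{k+1}-1}|c_{n}|\,n^{\gamma}\le C\,2^{k(\gamma+1/2-\alpha)}$, and this geometric series converges precisely when $\gamma<\alpha-\tfrac{1}{2}$, not when $\gamma<\tfrac{\alpha}{2}$ as the theorem asserts; for $\alpha<1$ the range $\alpha-\tfrac{1}{2}\le\gamma<\tfrac{\alpha}{2}$ is left uncovered, so your claim that the sum is finite ``in the stated range of $\gamma$'' is false. This cannot be repaired by sharpening Step 1, because $\alpha-\tfrac{1}{2}$ is in fact the sharp exponent for the $\mathrm{Lip}\,\alpha$ class in these systems: a Rudin--Shapiro block construction $f=\sum_{m}2^{-m(\alpha+1/2)}Q_{m}$ with $Q_{m}$ carrying $\pm1$ coefficients on $[2^{m},2^{m+1})$ and $\|Q_{m}\|_{\infty}\le C2^{m/2}$ lies in $\mathrm{Lip}\,\alpha$ and has $\sum_{n}|c_{n}|\,n^{\alpha-1/2}=+\infty$, and a Takagi-type example does the same for Haar. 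The stated exponent $\tfrac{\alpha}{2}$ therefore appears to be a misstatement of $\alpha-\tfrac{1}{2}$; the two coincide exactly at $\alpha=1$, the only case the paper actually uses, and there your argument does yield the correct conclusion $\gamma<\tfrac{1}{2}$ (for Haar and Walsh unconditionally, for the trigonometric system under periodicity).
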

	
	\begin{theorem}[Bochkarev (see \cite{Bochkariov1} and \cite{Bochkarev2})] 
		\label{theorem B} For every complete
		system $\left(\varphi _{n}\right)$ and for every $\alpha \in (0;1]$ there
		exists functions $f \in Lip\alpha $ such that
		\begin{equation*}
		\sum_{n=1}^{\infty }\left\vert c_{n}\left( f_{\alpha }\right) \right\vert n^{\frac{\alpha }{2}}=\infty ,
		\end{equation*}
		
		when $\alpha =1,$ the following is true:
		\begin{equation*}
		\sum_{n=1}^{\infty }\left\vert c_{n}\left( f_1\right) \right\vert n^{\frac{1}{2}}=\infty.
		\end{equation*}%
	\end{theorem}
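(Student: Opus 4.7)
I would attack this by the uniform boundedness principle applied in the Banach space $Lip\alpha$. For each $N\ge 1$ and each sign vector $\varepsilon=(\varepsilon_1,\dots,\varepsilon_N)\in\{-1,+1\}^N$, consider the bounded linear functional on $Lip\alpha$
\begin{equation*}
T_N^\varepsilon(f) \,:=\, \sum_{n=1}^N \varepsilon_n\,n^{\alpha/2}\,c_n(f) \,=\, \int_0^1 f(x)\,\Phi_N^\varepsilon(x)\,dx,
\end{equation*}
where $\Phi_N^\varepsilon(x):=\sum_{n=1}^N \varepsilon_n n^{\alpha/2}\varphi_n(x)$. Choosing $\varepsilon_n=\operatorname{sgn} c_n(f)$ gives $\sup_\varepsilon T_N^\varepsilon(f)=\sum_{n=1}^N |c_n(f)|\,n^{\alpha/2}$. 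Hence it suffices to select, for each $N$, signs $\varepsilon^{(N)}$ with $\|T_N^{\varepsilon^{(N)}}\|_{(Lip\alpha)^*}\to\infty$: Banach--Steinhaus then furnishes an $f_\alpha\in Lip\alpha$ along which the partial sums blow up, which is exactly the required divergence of $\sum_n|c_n(f_\alpha)|n^{\alpha/2}$.

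The dual norm is read off by integration by parts. Since any $f\in Lip1$ is absolutely continuous with $\|f'\|_\infty\le\|f\|_{Lip1}$, setting $G_N^\varepsilon(x)=\int_0^x \Phi_N^\varepsilon(t)\,dt$ gives
\begin{equation*}
T_N^\varepsilon(f) \,=\, f(1)\,G_N^\varepsilon(1)-\int_0^1 f'(x)\,G_N^\varepsilon(x)\,dx,
\end{equation*}
and picking $f$ with $f'=-\operatorname{sgn}(G_N^\varepsilon)$ (suitably regularised) together with a matching boundary value yields
$\|T_N^\varepsilon\|_{(Lip1)^*}\asymp |G_N^\varepsilon(1)|+\|G_N^\varepsilon\|_{L^1(0,1)}$. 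An analogous duality between the Hölder seminorm of order $\alpha$ and a Riemann--Liouville fractional primitive of $\Phi_N^\varepsilon$ delivers the corresponding estimate for $\alpha\in(0,1)$.

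The crux, and the real obstacle, is the next step: exhibiting signs $\varepsilon^{(N)}$ for which these norms diverge, \emph{uniformly} over arbitrary complete orthonormal systems. I would randomise, taking $\varepsilon_n$ to be i.i.d.\ Rademacher variables, and apply the Khintchine and Kahane inequalities to reduce the desired $L^1$ lower bound on $G_N^\varepsilon$ to a quadratic expectation that Parseval's identity evaluates; completeness of $(\varphi_n)$ must then enter in order to prevent pathological concentration of $\sum n^{\alpha/2}\varphi_n$ that would defeat the bound. A single realisation of the signs would then meet the requirement, and Banach--Steinhaus finishes the proof in one line. The exponent $\alpha/2$ is the critical one coming out of this scaling; the separate display for $\alpha=1$ is just the specialisation relevant to the present paper.
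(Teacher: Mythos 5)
The paper does not prove this statement at all: Theorem \ref{theorem B} is quoted as a known result of Bochkarev, with the proof living in \cite{Bochkariov1} and \cite{Bochkarev2}. So there is no in-paper argument to compare against; I can only judge your outline on its own terms, and on those terms it has a genuine gap exactly where you yourself flag one. The Banach--Steinhaus reduction and the duality computation identifying $\|T_N^\varepsilon\|_{(Lip1)^*}$ with $|G_N^\varepsilon(1)|+\|G_N^\varepsilon\|_{L^1}$ are fine (and are the same functional-analytic frame the authors use later for their own Theorem \ref{theorem6}), but they are the routine part. The entire content of Bochkarev's theorem is the lower bound you defer to the last paragraph: producing signs for which these dual norms diverge \emph{uniformly over every complete orthonormal system}.

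Your proposed route there does not close. Khintchine gives
\begin{equation*}
\mathbb{E}_\varepsilon\,\|G_N^\varepsilon\|_{L^1}\;\gtrsim\;\int_0^1\Bigl(\sum_{n\le N} n^{\alpha}\Bigl|\int_0^x\varphi_n(t)\,dt\Bigr|^2\Bigr)^{1/2}dx,
\end{equation*}
and completeness enters through Parseval applied to $\chi_{[0,x]}$, namely $\sum_{n}\bigl|\int_0^x\varphi_n\bigr|^2=x$. But that identity alone does not force the \emph{weighted} sum to diverge: nothing in it prevents the spectral mass of $\chi_{[0,x]}$ from sitting on small indices $n$, in which case the weights $n^{\alpha}$ buy you nothing and the right-hand side stays bounded. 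Ruling out this concentration simultaneously for all $x$ is precisely the hard step, and it is what Bochkarev's averaging method (the subject of \cite{Bochkarev2}) is built to do; invoking ``completeness must then enter'' is naming the obstacle, not overcoming it. There is also a secondary soft spot for $\alpha<1$: functions in $Lip\alpha$ need not be absolutely continuous, so the integration-by-parts duality must genuinely be replaced by the fractional-primitive version you allude to, which changes the object whose $L^1$ norm you must bound from below. As it stands the proposal is a correct scaffold around an unproven core.
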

	
	\begin{theorem} [see \cite{Tsagareishvili}]  
		\label{theorem C} Let $f \in Lip1$, $\left( \varphi _{n}\right) $ be ONS on $\left[0,1\right]$ and
		\begin{equation*}
		\int_{0}^{1}\varphi _{n}\left( x\right) dx=0,
		\end{equation*}
		where $n=m,m+1, ...$ and $f\in Lip1$. Then 
		\begin{equation*}
		\sum_{n=1}^{\infty }\left\vert c_{n}\left( f\right) \right\vert n^{\gamma}<+\infty, \text{     } \left(0<\gamma<1\right)
		\end{equation*}
		if\ and only if 
		\begin{equation*}
		\frac{1}{n}B_{n}^{\left( \gamma \right) }\left( t\right) <M,\text{ }\forall
		t\in (0;1],
		\end{equation*}
		where 
		\begin{equation*}
		B_{n}^{\left( \gamma \right) }\left( t\right) =\sum_{i=1}^{n-1}\left\vert
		\int_{0}^{\frac{i}{n}}\sum_{k=1}^{n}k^{\gamma }\varphi _{k}\left( x\right) r_{k}\left( t\right)
		dx\right\vert
		\end{equation*}
		and $M$ does not depend on $n$ and $t$, and $r_{k}\left( t\right) $ are Rademacher functions (see \cite{Alekits}, ch.1).
	\end{theorem}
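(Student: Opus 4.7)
The plan is to convert the absolute convergence question, via Rademacher functions, into a uniform boundedness question for the partial sums $S_n(t) := \sum_{k=1}^n c_k(f)\,k^{\gamma} r_k(t)$. Since the Rademacher functions realise every $\pm 1$ sign pattern at dyadic points, one has the identity
\[
\sum_{k=1}^\infty |c_k(f)|\,k^{\gamma} = \sup_{n \geq 1}\;\sup_{t \in (0,1]} |S_n(t)|,
\]
so both directions of the theorem reduce to comparing this supremum with $\frac{1}{n} B_n^{(\gamma)}(t)$.

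For sufficiency, I would first rewrite $S_n(t) = \int_0^1 f(x)\,\Phi_n(x,t)\,dx$ with $\Phi_n(x,t) := \sum_{k=1}^n k^{\gamma} \varphi_k(x) r_k(t)$ and integrate by parts using the absolute continuity of $f \in Lip1$, obtaining
\[
S_n(t) = f(1)\,F_n(1,t) - \int_0^1 f'(x)\,F_n(x,t)\,dx, \qquad F_n(x,t) := \int_0^x \Phi_n(u,t)\,du.
\]
The hypothesis $\int_0^1 \varphi_k = 0$ for $k \geq m$ reduces $F_n(1,t)$ to a finite sum over $k < m$ bounded by a constant independent of $n$ and $t$. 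For the main integral I would split $[0,1]$ into the subintervals $[i/n, (i+1)/n]$, replace $F_n(x,t)$ by $F_n(i/n,t)$ on each piece, and write the resulting Abel sum as
\[
\int_0^1 f'(x)\,F_n(x,t)\,dx = \sum_{i=1}^{n-1} F_n(i/n,t)\bigl(f((i+1)/n) - f(i/n)\bigr) + R_n(t).
\]
Because $|f((i+1)/n) - f(i/n)| \leq \|f\|_{Lip1}/n$, the leading sum is bounded by $\|f\|_{Lip1}\cdot\frac{1}{n} B_n^{(\gamma)}(t) \leq M\|f\|_{Lip1}$ directly from the hypothesis. The remainder $R_n(t)$, arising from the oscillation of $F_n$ on each subinterval, is then handled by a further integration by parts that exploits $|f(x) - f(i/n)| \leq \|f\|_{Lip1}/n$ to fold each error piece back into expressions of the form $\frac{1}{n} B_n^{(\gamma)}(t)$ together with boundary evaluations controlled by the same hypothesis.

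For necessity, I would invoke the Banach--Steinhaus theorem: finiteness of $\sum_k |c_k(f)|\,k^{\gamma}$ for every $f \in Lip1$ gives pointwise boundedness of the functionals $f \mapsto S_n(t,f)$ on the Banach space $Lip1$, hence uniform boundedness of their norms. To extract the pointwise inequality $\frac{1}{n} B_n^{(\gamma)}(t) \leq M$ from this, I would construct, for each pair $(n,t)$, a piecewise linear test function $f_{n,t}$ with nodes at $i/n$ and $f_{n,t}(i/n) := \tfrac{1}{n}\operatorname{sgn}(F_n(i/n,t))$; such a function has $Lip1$-norm bounded by an absolute constant, and the same integration-by-parts identity then yields $|S_n(t,f_{n,t})| \geq c\cdot\frac{1}{n} B_n^{(\gamma)}(t)$ up to residual terms of the same type. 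Uniform boundedness of the left side then forces $\frac{1}{n} B_n^{(\gamma)}(t) \leq M$.

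The principal obstacle in both directions is the control of the remainder $R_n(t)$: a crude bound $\int_0^1 |\Phi_n(u,t)|\,du \leq \|\Phi_n\|_{L^2} = \sqrt{\sum_{k=1}^n k^{2\gamma}} \leq Cn^{\gamma+1/2}$ via Cauchy--Schwarz yields only $O(n^{\gamma-1/2})$ for $R_n$, which is adequate for $\gamma<1/2$ but insufficient once $\gamma \geq 1/2$. Covering the full range $\gamma \in (0,1)$ requires the iterated integration by parts to keep exploiting the $Lip1$ regularity at each stage so that every error piece is reabsorbed into $\frac{1}{n} B_n^{(\gamma)}(t)$ itself, rather than relying on the global $L^2$-norm of $\Phi_n$.
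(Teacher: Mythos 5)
The paper never proves Theorem \ref{theorem C}; it is imported from \cite{Tsagareishvili}. The closest benchmark is the machinery of Theorems \ref{theorem5} and \ref{theorem6}, which implement exactly the strategy you describe: the reduction to $\sup_{n,t}|S_n(t)|$ via Rademacher sign patterns, the Abel-type decomposition (\ref{1}), Cauchy--Schwarz on the oscillation term, and Banach--Steinhaus with a test function built from $\operatorname{sgn}F_n$. Your architecture is the right one, but two steps, as written, fail.

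First, the necessity direction: your test function puts the sign into the \emph{values}, $f_{n,t}(i/n)=\tfrac1n\operatorname{sgn}F_n(i/n,t)$, whereas it must go into the \emph{increments}. With your choice the coefficient in the Abel sum is the difference $\tfrac1n\bigl(\operatorname{sgn}F_n(i/n,t)-\operatorname{sgn}F_n((i+1)/n,t)\bigr)$, which vanishes whenever consecutive $F_n(i/n,t)$ share a sign; if they all do, the key sum collapses to a single boundary term while $\tfrac1nB_n^{(\gamma)}(t)$ may be large. More bluntly, your $f_{n,t}$ has $\Vert f_{n,t}\Vert_C=O(1/n)$, so $|S_n(t,f_{n,t})|\le\tfrac1n\int_0^1|\Phi_n(x,t)|\,dx$, which is smaller by a factor of up to $n$ than the quantity $\tfrac1nB_n^{(\gamma)}(t)\le\int_0^1|\Phi_n|$ you need to bound; no such function can certify the estimate. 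The correct construction is the one in the proof of Theorem \ref{theorem6}: $f_{n,t}(x)=\int_0^x\operatorname{sgn}F_n(y,t)\,dy$, which has sup norm of order $1$ and increments $f(i/n)-f((i+1)/n)=-\int_{i/n}^{(i+1)/n}\operatorname{sgn}F_n$; one then needs Lemma \ref{lemma 1} to discard the indices where $F_n(\cdot,t)$ changes sign inside a subinterval, since there the increment is not $\pm\tfrac1n\operatorname{sgn}F_n(i/n,t)$. Second, you correctly diagnose that the Cauchy--Schwarz bound on the remainder is $O(n^{\gamma-1/2})$ and breaks down for $\gamma>1/2$, but the proposed cure (iterated integration by parts ``reabsorbing'' the error into $B_n^{(\gamma)}$) is not carried out and does not obviously work: integrating the remainder by parts once more merely reproduces a term of the same shape $\int f'(x)\bigl(F_n(x,t)-F_n(\cdot,t)\bigr)dx$, not a multiple of $\tfrac1nB_n^{(\gamma)}(t)$. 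As it stands, your argument covers only $0<\gamma\le 1/2$ (where $O(1)$ remainders suffice), not the full range $0<\gamma<1$ asserted in the statement.
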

	
		In 1962, A. Olevsky (see \cite{Olevsky}) proved that if $\ f\in L_2\left(0,1\right)$ and $(a_{n})\in l_{2}$ then
	there exist orthonormal system (ONS) $\left(\varphi _{n}\right)$, such that 
	\begin{equation*}
	c_{n}(f)=\int_{0}^{1}f\left( x\right) \varphi _{n}\left( x\right) dx=ba_{n}, \text{     } n=1,2,...,
	\end{equation*}
	and $b$ is an absulute constant which does not depend on $n$.

	\begin{theorem}
		\label{theorem D} For $f\left(x\right)=1, x\in \left[0,1\right] $ there exists ONS $\left(\varphi _{n}\right)$ such that for some $0<\gamma <\frac{1}{2},$
		\begin{equation*}
		\sum_{n=1}^{\infty }\left\vert c_{n}\left( f\right) \right\vert n^{\gamma}=+\infty.
		\end{equation*}
	\end{theorem}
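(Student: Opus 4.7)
The plan is to invoke Olevsky's theorem (quoted immediately above the statement) applied to $f(x)\equiv 1$, with a carefully chosen sequence $(a_n)\in \ell_2$ whose weighted sum $\sum a_n n^\gamma$ diverges for some $\gamma\in(0,1/2)$. Since $c_n(1)=\int_0^1 \varphi_n(x)\,dx = b a_n$ by Olevsky, the series in question is simply $|b|\sum a_n n^\gamma$, so divergence of the latter immediately gives divergence of the former.

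The natural candidate is a sequence sitting just inside $\ell_2$ but far from $\ell_1$. I would take
\begin{equation*}
a_1=0,\qquad a_n=\frac{1}{\sqrt{n}\,\log n}\quad (n\geq 2),
\end{equation*}
so that $\sum_{n\geq 2} a_n^2=\sum_{n\geq 2}\frac{1}{n(\log n)^2}<+\infty$ by the integral test, i.e.\ $(a_n)\in\ell_2$. Olevsky's theorem then produces an ONS $(\varphi_n)$ on $[0,1]$ and an absolute constant $b\neq 0$ with $c_n(1)=b a_n$ for every $n$.

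For any fixed $\gamma\in(0,1/2)$ we have $0<1/2-\gamma<1$, and therefore
\begin{equation*}
\sum_{n=1}^\infty |c_n(f)|\,n^\gamma = |b|\sum_{n\geq 2}\frac{n^\gamma}{\sqrt{n}\,\log n}=|b|\sum_{n\geq 2}\frac{1}{n^{1/2-\gamma}\log n}=+\infty,
\end{equation*}
which yields the desired conclusion.

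There is no substantive obstacle in this argument: the construction of the ONS is entirely delegated to Olevsky's result, and the verification reduces to a standard $\ell_2$-versus-weighted-$\ell_1$ comparison. The only point requiring attention is the choice of $(a_n)$: one needs $n^\gamma a_n$ to fail to be summable for some $\gamma<1/2$, while $a_n$ itself remains square-summable; the logarithmic regularization $1/(\sqrt{n}\log n)$ achieves exactly this borderline behaviour for every $\gamma\in(0,1/2)$ simultaneously.
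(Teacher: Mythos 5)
Your argument is correct and follows essentially the same route as the paper: both invoke Olevsky's theorem for $f\equiv 1$ with an explicit $\ell_2$ sequence whose $n^\gamma$-weighted sum diverges (the paper takes $a_n=n^{-4/3}$ with $\gamma=1/3$, you take $a_n=1/(\sqrt{n}\log n)$, which even gives divergence for every $\gamma\in(0,1/2)$ simultaneously). No substantive difference in method.
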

	\begin{proof}
		Proof of theorem \ref {theorem D} is obtained from Olevsky's theorem (see \cite{Olevsky}).
		
		Let $f_{0}(x)=1,$ for all $x\in \lbrack 0,1]$ and $a_n=n^{-\frac{4}{3}}, \gamma=\frac{1}{3}$ then
		
		\begin{equation*}
		\sum_{n=1}^{\infty }a_{n}^{2}<+\infty .
		\end{equation*}
		By using theorem of Olevsky (see \cite{Olevsky}), there exists an orthonormal system $\left(\varphi _{n}\right),$ such that $c_{n}\left( f_{0}\right) =b\cdot n^{-\frac{4}{3}}$. It follows that
		
		\begin{equation*}
		\sum_{n=1}^{\infty } c_{n}^2\left( f_{0}\right)=b^2\cdot \sum_{n=1}^{\infty }n^{-\frac{4}{3}}<+\infty ,
		\end{equation*}
		
		and
		
		\begin{equation*}
		\sum_{n=1}^{\infty } n^{\frac{1}{3}}\left|c_{n}\left( f_{0}\right)\right|=\sum_{n=1}^{\infty }n^{-1}=+\infty.
		\end{equation*}
		
		The main aim of this paper is to find a positive bounded sequence $\left(d_{k}\right) $, when for any $f\in Lip1$, holds
		\begin{equation*}
		\sum_{n=1}^{\infty}n^{\frac{1}{2}}\left|c_n\left(f\right)\right|d_n<+\infty.
		\end{equation*}
		
		Similar problems  for the functions of finite variation with respect to general orthonormal systems were studied in the paper  \cite{Tsagareishvili,TsagareishviliTutberidze}.
		
		If $f,F\in L_2\left(0,1\right)$ and $f$ takes finite values at every point of  $\left[0,1\right]$, then next equality is true (see \cite{GogoladzeTsagareishvili})
		\begin{eqnarray}
		\label{1} \int_{0}^{1}f(x)F(x)dx &=& \sum_{i=1}^{N-1}\left(f\left(\frac{i}{N}\right)-f\left(\frac{i+1}{N}\right)\right)\int_{0}^{\frac{i}{N}}F\left(x\right)dx  \\ &+&\sum_{i=1}^{N}\int_{\frac{i-1}{N}}^{\frac{i}{N}}\left(f\left(x\right)-f\left(\frac{i}{N}\right)\right)F\left(x\right)dx \notag \\
		&+& f\left(1\right)\int_{0}^{1}F\left(x\right)dx . \notag
		\end{eqnarray}
	\end{proof}
	
	\begin{lemma}
		\label{lemma 1} Let $Q_n\in L_2$ and $E_n$ be a set of all $i$ $\left(i=1,2,...,n-1\right)$, for which there exists $t\in\left[\frac{i-1}{n},\frac{i}{n}\right]$ such, that 
		\begin{equation}
		sign\int_{0}^{t}Q_n \left(x\right) dx \ne sign\int_{0}^{\frac{i}{n}}Q_n \left(x\right) dx, \label{2}
		\end{equation}
		then
		\begin{equation}
		\sum_{i\in E_n}\left|\int_{0}^{\frac{i}{n}}Q_n \left(x\right) dx\right|\leq \left(\int_{0}^{1}Q_{n}^{2} \left(x\right) dx\right)^{\frac{1}{2}}. \label{3}
		\end{equation}
		
	\end{lemma}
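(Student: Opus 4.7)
The plan is to locate, for each $i\in E_n$, a zero $t_i\in[\frac{i-1}{n},\frac{i}{n}]$ of the absolutely continuous function $t\mapsto\int_{0}^{t}Q_n(x)\,dx$, and then rewrite the integral over $[0,i/n]$ as an integral over the short interval $[t_i,i/n]$ where we can apply Cauchy--Schwarz effectively.

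First I would note that the indices $i\in E_n$ with $\int_{0}^{i/n}Q_n=0$ contribute nothing to the left-hand side of \eqref{3}, so we may assume $\int_{0}^{i/n}Q_n(x)\,dx\neq 0$. The defining condition \eqref{2} then gives some $t\in[\frac{i-1}{n},\frac{i}{n}]$ at which $\int_{0}^{t}Q_n(x)\,dx$ has the opposite (nonzero) sign, or is zero. Since $t\mapsto\int_{0}^{t}Q_n(x)\,dx$ is continuous, the intermediate value theorem yields a point $t_i\in[\frac{i-1}{n},\frac{i}{n}]$ with $\int_{0}^{t_i}Q_n(x)\,dx=0$. Consequently
\begin{equation*}
\int_{0}^{i/n}Q_n(x)\,dx=\int_{t_i}^{i/n}Q_n(x)\,dx,
\end{equation*}
and by the Cauchy--Schwarz inequality
\begin{equation*}
\left|\int_{0}^{i/n}Q_n(x)\,dx\right|\le\sqrt{\tfrac{i}{n}-t_i}\left(\int_{t_i}^{i/n}Q_n^{2}(x)\,dx\right)^{1/2}.
\end{equation*}

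Next I would sum over $i\in E_n$ and apply Cauchy--Schwarz a second time, this time discretely. The key structural observation is that the intervals $(t_i,i/n)$, for distinct $i\in E_n$, are pairwise disjoint, since each sits inside the corresponding disjoint dyadic subinterval $[\frac{i-1}{n},\frac{i}{n}]$. Therefore
\begin{equation*}
\sum_{i\in E_n}\left(\tfrac{i}{n}-t_i\right)\le\sum_{i=1}^{n-1}\tfrac{1}{n}\le 1,\qquad\sum_{i\in E_n}\int_{t_i}^{i/n}Q_n^{2}(x)\,dx\le\int_{0}^{1}Q_n^{2}(x)\,dx.
\end{equation*}
Combining these with the discrete Cauchy--Schwarz inequality yields
\begin{equation*}
\sum_{i\in E_n}\left|\int_{0}^{i/n}Q_n(x)\,dx\right|\le\left(\sum_{i\in E_n}\left(\tfrac{i}{n}-t_i\right)\right)^{1/2}\left(\sum_{i\in E_n}\int_{t_i}^{i/n}Q_n^{2}(x)\,dx\right)^{1/2}\le\left(\int_{0}^{1}Q_n^{2}(x)\,dx\right)^{1/2},
\end{equation*}
which is precisely \eqref{3}.

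The only genuinely delicate step is the existence of $t_i$: we must be a bit careful about the case where the ``opposite sign'' at the chosen $t$ in the hypothesis is actually the sign $0$, but this is still handled by continuity since then $t_i=t$ works. Everything else is a routine application of Cauchy--Schwarz combined with the disjointness of the subintervals, so the main conceptual content is the observation that the sign change forces the cumulative integral to vanish somewhere inside $[\frac{i-1}{n},\frac{i}{n}]$, shortening the effective interval of integration from $[0,i/n]$ to a piece of length at most $1/n$.
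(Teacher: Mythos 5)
Your proposal is correct and follows essentially the same route as the paper: locate a zero $t_i$ of $t\mapsto\int_0^t Q_n$ inside $[\frac{i-1}{n},\frac{i}{n}]$ via the sign change and continuity, rewrite $\int_0^{i/n}Q_n=\int_{t_i}^{i/n}Q_n$, and exploit disjointness of the intervals $(t_i,\frac{i}{n})$. The only (cosmetic) difference is at the end: the paper bounds $\sum_{i\in E_n}\bigl|\int_{t_i}^{i/n}Q_n\bigr|$ directly by $\int_0^1|Q_n|$ and applies Cauchy--Schwarz once on $[0,1]$, whereas you apply Cauchy--Schwarz locally on each subinterval and then discretely over $i$; both yield the same bound.
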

	\begin{proof}
		From (\ref{2}) we get that, there exist $t_{i}\in\left[\frac{i-1}{n},\frac{i}{n}\right]$ for $\forall i\in E_n$ such, that
		\begin{equation*}
		\int_{0}^{t_{i}}Q_n \left(x\right) dx=0.
		\end{equation*}
		Hence,
		\begin{eqnarray}
		\int_{0}^{\frac{i}{n}}Q_n \left(x\right) dx&=&\int_{0}^{t_{i}}Q_n \left(x\right) dx +\int_{t_{i}}^{\frac{i}{n}}Q_n \left(x\right) dx \notag \\
		&=&\int_{t_i}^{\frac{i}{n}}Q_n \left(x\right)dx.   \notag
		\end{eqnarray}
		From here we have
		\begin{eqnarray}
		\sum_{i\in E_n}\left|\int_{0}^{\frac{i}{n}}\varphi_n \left(x\right) dx\right|&\leq& \sum_{i\in E_n}\left|\int_{t_{i}}^{\frac{i}{n}}Q_n \left(x\right)dx\right|  \notag \\
		&\leq& \int_{0}^{1}\left|Q_n \left(x\right)\right|dx \notag \\
		&\leq& \left(\int_{0}^{1}Q_{n}^{2} \left(x\right) dx\right)^{\frac{1}{2}}. \notag
		\end{eqnarray}
	\end{proof}
	
	Set
	\begin{equation}
	Q_N \left(x,\varepsilon\right):=\sum_{k=1}^{N}d_k k^{\frac{1}{2}}\varphi_k \left(x\right)\varepsilon_k, \label{*}
	\end{equation}
	where $\left(\varphi_n \right)$ is ONS. 
	
	Denote by
	\begin{equation}
	D_N\left(\varepsilon\right):=\frac{1}{N}\sum_{i=1}^{N-1}\left|\int_{0}^{\frac{i}{N}}Q_N \left(x,\varepsilon\right)dx \right|.	\label{***} 
	\end{equation}

	\section{\textbf{The Main Results }}
	
	For given $\left( \varphi _{n}\right)$ and every $\ f \in Lip1$ what should be sequence $\left( d_{k}\right)$ such that the following condition holds true:
	\begin{equation*}
	\sum_{n=1}^{\infty }\left\vert c_{n}\left( f\right) \right\vert n^{\frac{1}{2}}d_{n}<+\infty .
	\end{equation*}%

	Our main results read as:
	
	\begin{theorem}
		\label{theorem5} Let $\left(\varphi _{n}\right) $ be an ONS and  
		\begin{equation*}
		\int_{0}^{1}\varphi _{n}\left(x\right)dx=0, \text{ } \text{ } n=1,2,..., .
		\end{equation*}
		Then, the sequence $\left(d_k\right)$ are absolute convergence factor for the $Lip1$ class if 
		\begin{eqnarray}
		D_N\left(\varepsilon\right)=O\left(1\right) \label{5.1}
		\end{eqnarray}
		for all $\varepsilon\in\left(\varepsilon_n\right)\in \Delta.$
	\end{theorem}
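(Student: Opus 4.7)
The plan is to bound the partial sums $S_N := \sum_{k=1}^{N} |c_k(f)|\, k^{1/2} d_k$ uniformly in $N$ and then pass to the limit. Fix $N \geq 1$, set $\varepsilon_k := \operatorname{sign} c_k(f)$ for $1 \le k \le N$ and $\varepsilon_k := 0$ otherwise; then $\varepsilon = (\varepsilon_k) \in \Delta$. Using the integral definition of $c_k(f)$ and interchanging the sum with the integral,
\[
S_N \;=\; \int_0^1 f(x)\, Q_N(x,\varepsilon)\, dx,
\]
with $Q_N$ as in (\ref{*}).

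Next I would apply the identity (\ref{1}) with $F(x) = Q_N(x,\varepsilon)$, splitting $S_N$ into three pieces. The boundary term $f(1)\int_0^1 Q_N(x,\varepsilon)\,dx$ vanishes since $\int_0^1 \varphi_n(x)\,dx = 0$ for every $n$. For the first (discrete-variation) piece the hypothesis (\ref{5.1}) is decisive: because $f\in\mathrm{Lip}\,1$ we have $|f(i/N)-f((i+1)/N)| \le \|f\|_{\mathrm{Lip}\,1}/N$, and therefore
\[
\left|\sum_{i=1}^{N-1}\Bigl(f(\tfrac{i}{N})-f(\tfrac{i+1}{N})\Bigr)\int_0^{i/N} Q_N(x,\varepsilon)\,dx\right| \;\le\; \|f\|_{\mathrm{Lip}\,1}\, D_N(\varepsilon) \;=\; O(\|f\|_{\mathrm{Lip}\,1})
\]
uniformly in $N$ by (\ref{5.1}).

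For the remaining piece, on each $[(i-1)/N,\, i/N]$ the Lipschitz bound $|f(x)-f(i/N)| \le \|f\|_{\mathrm{Lip}\,1}/N$ combined with the Cauchy-Schwarz inequality and orthonormality of $(\varphi_k)$ yields
\[
\left|\sum_{i=1}^{N}\int_{(i-1)/N}^{i/N}\bigl(f(x)-f(\tfrac{i}{N})\bigr)\, Q_N(x,\varepsilon)\,dx\right| \;\le\; \frac{\|f\|_{\mathrm{Lip}\,1}}{N}\Bigl(\sum_{k=1}^{N} d_k^2\, k\Bigr)^{1/2} \;=\; O(\|f\|_{\mathrm{Lip}\,1}\|d\|_\infty),
\]
since $\sum_{k=1}^{N} k = O(N^2)$ and $(d_k)$ is bounded. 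Combining the three bounds gives $S_N = O(1)$ uniformly in $N$, so the positive series $\sum |c_k(f)|\,k^{1/2}d_k$ converges.

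The main difficulty I expect is to recognize that the functional $D_N(\varepsilon)$ of (\ref{***}) is exactly tailored to control the discrete-variation piece produced by the decomposition (\ref{1}); once this matching is spotted, the zero-mean assumption handles the boundary term and the Lip1 regularity together with orthonormality handle the remainder term in a routine manner. Lemma \ref{lemma 1} appears not to be needed for this implication, and is presumably reserved for the converse/sharpness statements later in the paper.
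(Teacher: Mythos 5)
Your proposal is correct and follows essentially the same route as the paper: reduce the partial sum to $\int_0^1 f\,Q_N(\cdot,\varepsilon)\,dx$ with $\varepsilon_k=\operatorname{sign}c_k(f)$, apply the decomposition (\ref{1}), kill the boundary term via the zero-mean hypothesis, control the discrete-variation term by $D_N(\varepsilon)$, and bound the remainder by Cauchy--Schwarz and orthonormality. Your closing observation is also accurate: Lemma \ref{lemma 1} is not used here and only enters in the proof of the converse (Theorem \ref{theorem6}).
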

	\begin{proof}[\textbf{Proof of theorem \protect\ref{theorem5}.}]
		Let $f\in Lip1$ and $\varepsilon\in\left(\varepsilon_n\right)\in \Delta$ be selected so that 
		\begin{equation*}
		sign c_k\left(f\right)=\varepsilon_k, k=1,2,3, \dots 
		\end{equation*}
	Then
		\begin{equation*}
		\sum_{k=1}^{N}\left\vert c_{k}(f)\right\vert d_{k} k^{\frac{1}{2}}=\sum_{k=1}^{N}d_{k}k^{\frac{1}{2}}c_k \left(f\right)\varepsilon_k.
		\end{equation*}
		Hence (see (\ref{*}))
		\begin{eqnarray}
		\label{8} \sum_{k=1}^{N}\left\vert c_{k}(f)\right\vert d_{k} k^{\frac{1}{2}} &=& \int_{0}^{1}f\left(x\right)\sum_{k=1}^{N}d_{k} k^{\frac{1}{2}}\varphi_k\left(x\right)\varepsilon_k  \text{ } dx\\ 
		&=&\int_{0}^{1}f\left(x\right)Q_N \left(x,\varepsilon\right)dx. \notag
		\end{eqnarray}	
		By applying (\ref{1}) we can conclude that,
		\begin{eqnarray}
		\int_{0}^{1}f\left(x\right)Q_N \left(x,\varepsilon\right)dx \label{9} 
		&=& \sum_{i=1}^{N-1}\left(f\left(\frac{i}{N}\right)-f\left(\frac{i+1}{N}\right)\right)\int_{0}^{\frac{i}{N}}Q_N\left(x,\varepsilon\right)dx \notag \\ &+&\sum_{i=1}^{N}\int_{\frac{i-1}{N}}^{\frac{i}{N}}\left(f\left(x\right)-f\left(\frac{i}{N}\right)\right)Q_N\left(x,\varepsilon\right)dx  \\
		&=& I_1 +I_2. \notag
		\end{eqnarray}
		According to condition of theorem \ref{theorem5}, we get that (see (\ref{***}))
		\begin{eqnarray}
		\left|I_1\right|&=&\left|\sum_{i=1}^{N-1}\left(f\left(\frac{i}{N}\right)-f\left(\frac{i+1}{N}\right)\right)\int_{0}^{\frac{i}{N}}Q_N\left(x,\varepsilon\right)dx\right| \label{10} \\
		&=& O(1)\frac{1}{N}\sum_{i=1}^{N-1} \left|\int_{0}^{\frac{i}{N}}Q_N\left(x,\varepsilon\right)dx\right| \notag \\
		&=&O\left(1\right)D_N\left(\varepsilon\right)= O\left(1\right). \notag
		\end{eqnarray}
	Using the Cauchy inequality for $I_2$ we have that (see (\ref{*}))
		\begin{eqnarray}
		\text{\ \ \ \ \ \ \ \ \ \ \ \ }\left|I_2\right| &=&  \left|\sum_{i=1}^{N}\int_{\frac{i-1}{N}}^{\frac{i}{N}}\left(f\left(x\right)-f\left(\frac{i}{N}\right)\right)Q_N\left(x,\varepsilon\right)dx \right|      \label{11} \\
		&=&  O\left(\frac{1}{N}\right) \int_{0}^{1}\left|Q_N\left(x,\varepsilon\right)\right|dx \notag \\
		&=& O\left(\frac{1}{N}\right) \left(\int_{0}^{1}\left(\sum_{k=1}^{N}d_k k^{\frac{1}{2}}\varphi_k\left(x\right)\varepsilon_k\right)^2 dx\right)^\frac{1}{2} \notag \\
		&=& O\left(\frac{1}{N}\right) \left(\sum_{k=1}^{N} k d_k^2 \right)^{\frac{1}{2}} = O\left(\frac{1}{N} N\right )\max_{1 \leq k \leq N}d_k=O\left(1\right).    \notag
		\end{eqnarray}
		By combining (\ref{9}), (\ref{10}) and  (\ref{11}) we can write that
		\begin{equation}
		\left|\int_{0}^{1}f\left(x\right)Q_N \left(x,\varepsilon\right)dx\right|<c, \label{12.1}
		\end{equation}
		where $c>0$ does not depend on $N$. \\
		
	From (\ref{8}) and (\ref{12.1}), we get
		\begin{equation*}
		\sum_{k=1}^{\infty}\left|c_k\left(f\right)\right|k^\frac{1}{2}d_k<+\infty.
		\end{equation*}
		Theorem \ref{theorem5} is proved.
	\end{proof}

	\begin{theorem}
		\label{theorem6} Let $\left(\varphi _{n}\right) $ be an orthonormal system and $\left(d_n\right)$ is any given positive bounded sequence. If for some  $\varepsilon^0\in\left(\varepsilon_n^0\right)\in \Delta$ 
		\begin{equation}
		\label{12} \overline{\lim_{N\rightarrow \infty }}D_{N}\left( \varepsilon^0\right) =+\infty,
		\end{equation}
		then $\left(d_n\right)$ is not an absolutely convergence factor for $Lip1$ class.
	\end{theorem}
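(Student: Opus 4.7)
The plan is to produce a single $f\in Lip1$ witnessing $\sum_{k}|c_k(f)|k^{1/2}d_k=+\infty$ via the Banach--Steinhaus principle applied to the continuous linear functionals
\[
T_N f:=\int_0^1 f(x)\,Q_N(x,\varepsilon^0)\,dx=\sum_{k=1}^{N}d_k k^{1/2}c_k(f)\varepsilon_k^0
\]
on the Banach space $Lip1$. Once $\sup_N|T_N f|=+\infty$ holds for some $f\in Lip1$, the triangle inequality $|T_N f|\le\sum_{k=1}^{N}|c_k(f)|k^{1/2}d_k$ forces the right-hand side to diverge. So the whole task reduces to showing that $\sup_N\|T_N\|_{(Lip1)^*}=+\infty$.

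The key step is to construct, for each $N$ in a subsequence $N_\ell$ on which $D_{N_\ell}(\varepsilon^0)\to\infty$ (available by (\ref{12})), a test function $f_N\in Lip1$ of norm bounded independently of $N$ with $|T_N f_N|$ large. I would set
\[
\eta_i:=\mathrm{sign}\left(\int_0^{i/N}Q_N(x,\varepsilon^0)\,dx\right)\in\{-1,0,1\},\qquad i=1,\dots,N-1,
\]
and let $f_N$ be the piecewise-linear interpolant on the mesh $\{i/N\}_{i=0}^{N}$ specified by $f_N(1)=0$ and $f_N(i/N)-f_N((i+1)/N)=\eta_i/N$. Unrolling backwards from $f_N(1)=0$ gives $f_N(i/N)=N^{-1}\sum_{j=i}^{N-1}\eta_j$, so $\|f_N\|_C\le 1$; the slope on each subinterval is $-\eta_i\in\{-1,0,1\}$, so the Lipschitz constant is $\le 1$; hence $\|f_N\|_{Lip1}\le 2$.

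Next I would evaluate $T_N f_N$ by applying identity~(\ref{1}) to $f_N$ and $F=Q_N(\cdot,\varepsilon^0)$. The third term $f_N(1)\int_0^1 Q_N\,dx$ vanishes by construction; enforcing $f_N(1)=0$ is crucial here because, unlike in Theorem~\ref{theorem5}, no zero-mean hypothesis on $(\varphi_n)$ is available. The first term telescopes into
\[
\sum_{i=1}^{N-1}\frac{\eta_i}{N}\int_0^{i/N}Q_N(x,\varepsilon^0)\,dx=\frac{1}{N}\sum_{i=1}^{N-1}\left|\int_0^{i/N}Q_N(x,\varepsilon^0)\,dx\right|=D_N(\varepsilon^0),
\]
and the middle term is $O(1)$ by the Cauchy--Schwarz calculation used in (\ref{11}), which needs only that $(d_k)$ is bounded. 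Consequently $|T_N f_N|\ge D_N(\varepsilon^0)-C$ for some constant $C$ independent of $N$, and along the chosen subsequence $\|T_{N_\ell}\|_{(Lip1)^*}\ge|T_{N_\ell}f_{N_\ell}|/2\to\infty$.

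The main obstacle is the simultaneous realization of four properties in a single $f_N$: piecewise linearity on the mesh $\{i/N\}$, the boundary condition $f_N(1)=0$ (to eliminate the $f(1)\int_0^1 Q_N\,dx$ term of (\ref{1})), the prescribed sign pattern making $I_1$ coincide with $D_N(\varepsilon^0)$, and a $Lip1$-norm bounded independently of $N$. The backward summation above accomplishes all four at once. After that, Banach--Steinhaus on $Lip1$ (each $T_N$ is bounded since $Q_N\in L_2$ and $Lip1$ embeds continuously into $L_2$) produces $f\in Lip1$ with $\sup_N|T_N f|=+\infty$, completing the argument.
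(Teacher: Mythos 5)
Your proposal is correct and shares the paper's overall skeleton --- build test functions $f_N$ with $\Vert f_N\Vert_{Lip1}\leq 2$ and $\vert T_Nf_N\vert\geq D_N(\varepsilon^0)-O(1)$, then invoke Banach--Steinhaus and the bound $\vert T_Nf\vert\leq\sum_{k\leq N}\vert c_k(f)\vert k^{1/2}d_k$ --- but your construction of $f_N$ is genuinely different and buys two simplifications. The paper takes $f_N(x)=\int_0^x \mathrm{sign}\int_0^y Q_N(z,\varepsilon^0)\,dz\,dy$, so the increment $f_N(i/N)-f_N((i+1)/N)$ produces the wanted value $-\frac{1}{N}\bigl\vert\int_0^{i/N}Q_N\bigr\vert$ only on cells where $\mathrm{sign}\int_0^yQ_N$ is constant; the exceptional indices form the set $E_N$ of Lemma~\ref{lemma 1}, and that lemma is needed precisely to show their total contribution is $O(1)$, giving $\vert I_1\vert\geq D_N(\varepsilon^0)-O(1)$. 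By prescribing the mesh increments directly as $\eta_i/N$ with $\eta_i=\mathrm{sign}\int_0^{i/N}Q_N$, you get $I_1=D_N(\varepsilon^0)$ exactly and dispense with Lemma~\ref{lemma 1} and the $E_N$/$F_N$ splitting entirely. Moreover, your normalization $f_N(1)=0$ explicitly kills the boundary term $f(1)\int_0^1Q_N\,dx$ of identity (\ref{1}); the paper instead reuses decomposition (\ref{9}), which was written down under the zero-mean hypothesis $\int_0^1\varphi_n\,dx=0$ of Theorem~\ref{theorem5} that is not restated in Theorem~\ref{theorem6}, and without that hypothesis the dropped term is a priori only $O\bigl(\Vert Q_N\Vert_{L_2}\bigr)=O(N)$, so your handling is the more carefully justified one. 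The remaining steps (the $O(1)$ bound for $I_2$ via Cauchy--Schwarz and boundedness of $(d_k)$, the boundedness of each functional $T_N$ on $Lip1$, and the final passage from $\sup_N\vert T_Nf\vert=+\infty$ to divergence of the series) coincide with the paper's. The only loose end, easily repaired, is that your increments determine $f_N$ only at the nodes $i/N$ for $i\geq 1$; any Lipschitz-$1$ extension to $[0,1/N]$ (e.g.\ constant) makes the $i=1$ term of $I_2$ behave as claimed.
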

	\begin{proof}[\textbf{Proof of theorem \protect\ref{theorem6}.}]
		Suppose that for some $\varepsilon^0\in\left(\varepsilon_n^0\right)\in \Delta,$ the following holds
		\begin{equation*}
		\overline{\lim_{N\rightarrow \infty }}D_{N}\left( \varepsilon^0\right) =+\infty.
		\end{equation*}
		Now, we consider next sequences of functions 
		\begin{equation}
		\label{13} f_N\left(x\right)=\int_{0}^{x}sign\int_{0}^{y}Q_N\left(z,\varepsilon^0\right)dzdy.
		\end{equation}
		Set 
		\begin{equation*}
		F_N:=\left\{1,2,...,N-1,\right\}\setminus E_N.
		\end{equation*}
		Then by the definition of $F_N$ and using (\ref{13}) we get that
		\begin{eqnarray}
		\sum_{i\in F_N}\left(f_N\left(\frac{i}{N}\right)-f_N\left(\frac{i+1}{N}\right)\right)\int_{0}^{\frac{i}{N}}Q_N\left(x,\varepsilon^0\right)dx  \notag\\
		=-\frac{1}{N}\sum_{i\in F_N\notag} \left|\int_{0}^{\frac{i}{N}}Q_N\left(x,\varepsilon^0\right)dx\right|.
		\end{eqnarray}
		By applying (see (\ref{13}))
		\begin{equation*}
		\left|f_N \left(\frac{i}{N}\right)-f_N \left(\frac{i+1}{N}\right)\right|\leq\frac{1}{N},
		\end{equation*}
	and using the above received equality we have 
		
		\begin{eqnarray}
		\label{14} &&\left|\sum_{i=1}^{N-1}\left(f_N \left(\frac{i}{N}\right)-f_N \left(\frac{i+1}{N}\right)\right)\int_{0}^{\frac{i}{N}}Q_N \left(x,\varepsilon^0\right)dx\right|\\
		&=&\left| \sum_{i\in F_{N}}\left(F_{N}\left(\frac{i}{N}\right)-f_N \left(\frac{i+1}{N}\right)\right)\int_{0}^{\frac{i}{N}}Q_N\left(x,\varepsilon^0\right)dx \right. \notag \\
		&-&\left.\sum_{i\in E_{N}} \int_{\frac{i-1}{N}}^{\frac{i}{N}}\left(F_{N}\left( x \right)-f_N\left(\frac{1}{N}\right)\right)Q_N\left(x,\varepsilon^0\right)dx\right| \notag \\
		&\geq& \frac{1}{N}\sum_{i\in F_N}\left|\int_{0}^{\frac{i}{N}}Q_N \left(x,\varepsilon^0\right)dx\right|- \frac{1}{N}\sum_{i\in E_N}\left|\int_{0}^{\frac{i}{N}}Q_N \left(x,\varepsilon^0\right)dx\right| \notag \\	
		&=&\frac{1}{N}\sum_{i=1}^{N-1}\left|\int_{0}^{\frac{i}{N}}Q_N \left(x,\varepsilon^0\right)dx\right| - \frac{2}{N}\sum_{i\in E_N}\left|\int_{0}^{\frac{i}{N}}Q_N \left(x,\varepsilon^0\right)dx\right|. \notag	
		\end{eqnarray}

		According to Lemma\ref{lemma 1}
		
		\begin{eqnarray}
		\frac{2}{N}\sum_{i\in F_N}\left|\int_{0}^{\frac{i}{N}}Q_N \left(x,\varepsilon^0\right)dx\right|\leq \frac{2}{N}\left(\int_{0}^{1}Q_N^2 \left(x,\varepsilon^0\right)dx \right)^{\frac{1}{2}}\notag \\
		= O\left(\frac{1}{N}\right)\left(\sum_{k=1}^{N}k d_k^2\right)^{\frac{1}{2}} = O\left(\frac{1}{N} N\right )\max_{1 \leq k \leq N}d_k=O\left(1\right). \notag
		\end{eqnarray}

		Finally, we can conclude that (see (\ref{***}) and (\ref{14}))
		\begin{eqnarray}
		\label{15} \left|\sum_{i=1}^{N-1}\left(f_N \left(\frac{i}{N}\right)-f_N \left(\frac{i+1}{N}\right)\right)\int_{0}^{\frac{i}{N}}Q_N \left(x,\varepsilon^0\right)dx\right|
		\end{eqnarray}
		\begin{eqnarray}
		&\geq& \frac{1}{N}\sum_{i=1}^{N-1}\left|\int_{0}^{\frac{i}{N}}Q_N \left(x,\varepsilon^0\right)dx\right|-O\left(1\right) \notag \\
		&=& D_N\left(\varepsilon^0\right) -O\left(1\right).  \notag
		\end{eqnarray}
		By using (\ref{13}), for $x,y\in \left[\frac{i-1}{N},\frac{i}{N}\right], \text{   where }  i=1,2,..., N$		
		\begin{equation*}
		\left|f_N\left(x\right)-f_N\left(y\right)\right|\leq\frac{1}{N}.	
		\end{equation*}
		
	According to Cauchy inequality, we get 
		\begin{eqnarray}
		\label{16} \left|\sum_{i=1}^{N}\int_{\frac{i-1}{N}}^{\frac{i}{N}}\left(f_N\left(x\right)-f_N\left(\frac{i}{N}\right)\right)Q_N\left(x,\varepsilon^0\right)dx\right|
		\end{eqnarray}
		\begin{eqnarray}
	\text{\ \ \ \ \ \ \ \  \ \ \ \ \ \ \ \ \ \ \ \ \ }	&\leq& \frac{1}{N}\int_{0}^{1}\left\vert Q_N\left(x,\varepsilon^0\right) \right\vert dx  \notag \leq \frac{1}{N} \left(\int_{0}^{1}Q_N ^2 \left(x,\varepsilon^0\right)dx\right)^{\frac{1}{2}}  \notag \\
		&=&\frac{1}{N}\left(\sum_{k=1}^{N}k d_k^2\right)^{\frac{1}{2}}
		=O\left(\frac{1}{N} N\right )\max_{1 \leq k \leq N}d_k=O\left(1\right).   \notag
		\end{eqnarray}
		In equality (\ref{9}) suppose that,  $f\left(x\right)=f_N \left(x\right).$ If we consider inequalities (\ref{15}) and (\ref{16}), we obtain
		\begin{eqnarray} \label{20}
	&&\left|\int_{0}^{1}f_N\left(x\right) Q_N\left(x,\varepsilon^0\right)dx\right| \\
	&\geq&\frac{1}{N}\sum_{i=1}^{N-1}\left|\int_{0}^{\frac{1}{N}} Q_N\left(x,\varepsilon^0\right)dx\right|-O\left(1\right) \notag \\ &=&D_N\left(\varepsilon^0\right)-O\left(1\right).        \notag 
		\end{eqnarray}
		By using theorem \ref{theorem6} (see(\ref{12})) we get that (see (\ref{20}))
		\begin{equation} \label{21}
		\overline{\lim_{N\rightarrow \infty }}\left|\int_{0}^{1}f_N\left(x\right)Q_N\left(x,\varepsilon^0\right)dx\right|=\overline{\lim_{N\rightarrow \infty}}D_N\left(\varepsilon^0\right)=+\infty.
		\end{equation}
		We have (see (\ref{13})) 
		\begin{equation*}
		\parallel f_N \parallel_{Lip1}=\sup_{x,y \in \left[0,1\right]} \left|\frac{f_N\left(x\right)-f_N\left(y\right)}{x-y}\right|+\parallel f_N \left(x\right) \parallel_C =2.
		\end{equation*}
	Then if $f\in Lip1,$ as $\parallel f \parallel_C \leq \parallel f \parallel_{Lip1}$ using the Cauchy inequality we get
	\begin{eqnarray*}
	\left|\int_{0}^{1}f(x)Q_N(x,\varepsilon^0)dx\right| &\leq& \parallel f \parallel_C \int_{0}^{1} \left|Q_N(x,\varepsilon^0)\right|dx \\
	&\leq& \parallel f \parallel_{Lip1} \left(\int_{0}^{1} Q_N ^2(x,\varepsilon^0)dx\right)^{\frac{1}{2}} 	\\
	&\leq& \left(\sum_{k=1}^{N}d_k^2 k\right)^{\frac{1}{2}} \parallel f \parallel_{Lip1}.
	\end{eqnarray*}
	
		Since
		\begin{equation*}
		\int_{0}^{1}f\left(x\right)Q_N\left(x,\varepsilon^0\right)dx
		\end{equation*}
		are linear and bounded functionals on $Lip1$ class and $\parallel f_N \parallel_{Lip1} = 2,$ then according to Banach-Steinhaus theorem and (\ref{21}) we obtain that there exists function $f_0 \in Lip1$ such that 
		
		\begin{equation*}
		\overline{\lim_{N\rightarrow \infty }}\left|\int_{0}^{1}f_0\left(x\right)Q_N\left(x,\varepsilon^0\right)dx\right|=+\infty.
		\end{equation*}
		Hence, by taking into account equality (\ref{8}) we have that 
		\begin{equation*}
		\sum_{k=1}^{\infty}\left|c_k\left(f_0\right)\right|k^{\frac{1}{2}}d_k=+\infty.
		\end{equation*}
		Theorem \ref{theorem6} is proved.
	\end{proof}
	
	\section{Efficiency}

	\begin{theorem} \label{theorem7}
		For $\left(\chi_k\right)$ Haar system (see \cite{KashinSaakyan}, p.57) condition (\ref{5.1}) is fulfilled for 
		\begin{eqnarray}
		d_k=\frac{1}{\log^{1+\varepsilon}\left(k+1\right)} \label{22}
		\end{eqnarray}
		($\varepsilon>0 $ is an arbitrary number).
		\begin{proof}
			Using the definition of Haar's system we get when $k=2^s +l, l<2^s, \chi _k(x)=2^{\frac{s}{2}}$ when $x\in \left(\frac{2l-2}{2^{s+1}},\frac{2l-1}{2^{s+1}}\right),$ $\chi_k(x)=-2^{\frac{s}{2}}$ when $x\in \left(\frac{2l-1}{2^{s+1}},\frac{2l}{2^{s+1}}\right)$ and $\chi_k(x)=0$ when   $x\notin \left[\frac{l-1}{2^{s}},\frac{l}{2^{s}}\right].$ Thus 
			$$\left|\int_{0}^{x}\chi_k(u)du\right|\leq2^{\frac{s}{2}} \text{  when } x\in\left[\frac{l-1}{2^{s}},\frac{l}{2^{s}}\right]  $$ 
			and
			$$\left|\int_{0}^{x}\chi_k(u)du\right|=0 \text{  when } x\notin\left[\frac{l-1}{2^{s}},\frac{l}{2^{s}}\right] $$
			Consequently $\left(i=1,2,\dots, n-1\right)$ we have
			\begin{eqnarray}
			\left|\int_{0}^{\frac{i}{n}}\sum_{k=2^s}^{2^{s+1}-1}k^{\frac{1}{2}}\chi_{k}(x)d_k \varepsilon_k dx\right|\leq \sqrt{2}d_{k\left(s\right)}, \label{23}
			\end{eqnarray}
			where $2^s\leq k\left(s\right)<2^{s+1}.$
			Hence (see (\ref{22}))
			\begin{eqnarray}
			d_{k\left(s\right)}\leq\frac{1}{\log^{1+\varepsilon}\left(2^s\right)}=\frac{1}{s^{1+\varepsilon}}. \label{24}
			\end{eqnarray}
			Next, if $n=2^m +q, q<2^m$ we get (see (\ref{23}) and (\ref{24}))
			
			\begin{eqnarray}
			&&\frac{1}{n}\sum_{i=1}^{n-1}\left|\int_{0}^{\frac{i}{n}}\sum_{k=1}^{n}d_k k^{\frac{1}{2}}\chi_{k}\left(x\right) \varepsilon_k dx\right| \notag \\
			&\leq&\frac{1}{n}\sum_{i=1}^{n-1}\left|\int_{0}^{\frac{i}{n}}\sum_{k=2^m}^{2^m +q}d_k k^{\frac{1}{2}}\chi_{k}\left(x\right) \varepsilon_kdx\right| \notag \\ 
			&+&\frac{1}{n}\sum_{i=1}^{n-1}\left|\sum_{s=0}^{m-1}\int_{0}^{\frac{i}{n}}\sum_{k=2^s}^{2^{s+1}-1}d_k k^{\frac{1}{2}}\chi_{k}\left(x\right) \varepsilon_k dx\right| \notag \\
			&\leq& \frac{\sqrt{2}}{n} \sum_{i=1}^{n-1}d_{k\left(m\right)}\sqrt{k\left(m\right)}2^{\frac{-m}{2}}+\frac{\sqrt{2}}{n} \sum_{i=1}^{n-1} \sum_{s=1}^{m-1}d_{k\left(s\right)}\sqrt{k\left(s\right)}2^{\frac{-s}{2}} \notag \\
			&\leq&\frac{\sqrt{2}}{m^{1+\varepsilon}}+\sqrt{2}\sum_{s=1}^{m-1}\frac{1}{s^{1+\varepsilon}}=O\left(1\right). \notag
			\end{eqnarray}
			Theorem \ref{theorem7} is proved. 
		\end{proof}
	\end{theorem}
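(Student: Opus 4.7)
The plan is to exploit two features of the Haar system that together make the required estimate essentially trivial: the functions $\chi_k$ with $k$ in a dyadic block $[2^s,2^{s+1})$ have pairwise disjoint supports that cover $[0,1]$, and every $\chi_k$ with $k\ge 2$ has zero mean. These two facts imply that at any fixed point $x=i/N$, within each dyadic block at most one index $k$ contributes a nonzero value to $\int_0^{x}\chi_k(u)\,du$, since the primitive of a zero-mean Haar function vanishes outside its own dyadic interval.

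Concretely, for $k = 2^s+l$ with $l < 2^s$, the function $\chi_k$ is supported on an interval of length $1/2^s$ on which it takes values $\pm 2^{s/2}$; hence $|\int_0^x \chi_k(u)\,du| \le 2^{s/2}\cdot 2^{-s-1}$ there and is zero outside. Multiplying by $k^{1/2} \le 2^{(s+1)/2}$ shows that the contribution of a single summand $d_k k^{1/2}\chi_k\varepsilon_k$ to the primitive is at most a constant multiple of $d_k$, uniformly in the sign $\varepsilon_k$. I would then write $N = 2^m+q$ with $0 \le q < 2^m$, split the sum defining $Q_N$ into the complete blocks $s=0,1,\ldots,m-1$ plus the (possibly incomplete) top block $[2^m,2^m+q]$, and, using that only one index per block survives at $x=i/N$, bound the primitive by a constant times $\sum_{s=0}^{m} d_{k(s)}$ for some $k(s)\in[2^s,2^{s+1})$.

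Because $d_k = 1/\log^{1+\varepsilon}(k+1)$ and $\log(2^s) = s\log 2$, this bound reduces to a constant multiple of $\sum_{s\ge 1} s^{-(1+\varepsilon)}$, which converges independently of $i$, $N$ and $\varepsilon$. Hence $|\int_0^{i/N}Q_N(u,\varepsilon)\,du|=O(1)$ uniformly, and averaging over $i=1,\ldots,N-1$ immediately yields $D_N(\varepsilon)=O(1)$, which is precisely (\ref{5.1}). The main obstacle I anticipate is purely combinatorial, namely verifying that the factor $k^{1/2}$ is exactly absorbed by the elementary size $2^{-s/2-1}$ of the primitive so that every dyadic block contributes at most $C d_{k(s)}$, and handling the incomplete top block cleanly when $N$ is not a power of two; once this per-block bookkeeping is done, the convergence of $\sum s^{-(1+\varepsilon)}$ does all the remaining work.
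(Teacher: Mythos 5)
Your proposal is correct and follows essentially the same route as the paper: decompose $Q_N$ into dyadic Haar blocks, use the disjoint supports and zero means within each block so that at most one primitive survives at $x=i/N$, observe that $k^{1/2}$ is cancelled by the size $2^{-s/2-1}$ of that primitive, and conclude via the convergence of $\sum_s s^{-(1+\varepsilon)}$ after bounding $d_{k(s)}\leq C s^{-(1+\varepsilon)}$. The only cosmetic difference is that you obtain a bound on $\left|\int_0^{i/N}Q_N\right|$ that is uniform in $i$ before averaging, whereas the paper averages first; this changes nothing of substance.
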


\end{document}